%
%
%
%
%
\documentclass[a4paper,10pt]{article}
\usepackage{amsmath,amssymb,graphicx}
\usepackage{url}


\setlength{\headheight}{0mm}
\setlength{\oddsidemargin}{-0mm}
\setlength{\topmargin}{-15mm}        
\setlength{\textwidth}{160mm}
\setlength{\textheight}{220mm}       

\renewcommand{\title}[1]{\vspace{\fill}
\eject\addtolength{\baselineskip}{4pt}
{\bfseries\LARGE #1}\\[3mm]\addtolength{\baselineskip}{-4pt}}
\renewcommand{\author}[3]{\parbox[t]{75mm}
{\begin{center}{\scshape #1}\\[3mm] #2\\
 {\ttfamily #3} \end{center}}}

\newtheorem{thm}{\bfseries Theorem}
\newtheorem{cor}[thm]{\bfseries Corollary}

\newtheorem{conj}[thm]{\bfseries Conjecture}

\newenvironment{proof}{\medskip                    
\noindent{\scshape Proof:}}{\quad $\Box$\medskip}  

\pagestyle{empty}
\begin{document}
	
	\def\Nset{\mathbb{N}}
	\def\Ascr{\mathcal{A}}
	\def\Bscr{\mathcal{B}}
	\def\Cscr{\mathcal{C}}
	\def\Dscr{\mathcal{D}}
	\def\Escr{\mathcal{E}}
	\def\Fscr{\mathcal{F}}
	\def\Hscr{\mathcal{H}}
	\def\Iscr{\mathcal{I}}
	\def\Lscr{\mathcal{L}}
	\def\Mscr{\mathcal{M}}
	\def\Nscr{\mathcal{N}}
	\def\Pscr{\mathcal{P}}
	\def\Qscr{\mathcal{Q}}
	\def\Rscr{\mathcal{R}}
	\def\Sscr{\mathcal{S}}
	\def\Tscr{\mathcal{T}}
	\def\Uscr{\mathcal{U}}
	\def\Wscr{\mathcal{W}}
	\def\Xscr{\mathcal{X}}
	\def\cupp{\stackrel{.}{\cup}}
	\def\bold{\bf\boldmath}

\begin{center}

\title{Jump-systems of $T$-paths \footnote{To appear in the Proceedings of the Twelfth Japanese-Hungarian
		Symposium on Discrete Mathematics and Its Applications (March 2023)} }
\author{Mouna Sadli
}{
Institute for Higher Education in Morocco\\
Avenue Mohamed VI, Km 4.2 \\ (Route des Za\"ers) Souissi, Rabat, Morocco
}{
msadli@iihem.ac.ma
}
\author{
\underline{Andr\'as Seb\H{o}}
}{
Combinatorial Optimization \\
Univ. Grenoble-Alpes, CNRS, G-SCOP\\
46 Avenue F\'elix Viallet, 38240 Grenoble, France
}{
Andras.Sebo@cnrs.fr
}
%


\end{center}


\begin{quote}
{\bfseries Abstract:} Jump systems are sets of integer vectors satisfying a simple axiom, generalizing matroids, also delta-matroids, and well-kown combinatorial examples such as degree sequences of subgraphs of a graph. It is useful to know if  a set of vectors defined from combinatorial structures is a jump system:  this  has consequences for optimizing on the set, or on some derived sets of vectors.
In this note we are mainly concerned   in telling our proof of the following  more than two decades old fact and its original, elementary  proof for an example different from degree sequences: 

{\em Given an udirected graph $G=(V,E)$ and $T\subseteq V$,  the vectors $m$ indexed by  $T$ for which there exist a set of  openly disjoint $T$-paths so that each $t\in T$ is the endpoint of exactly $m(t)$ paths forms a jump system. The same holds for edge-disjoint $T$-paths.} 

We are also exhibiting the context and some consequences of this fact, with some pointers to  recent developments, among them ro another proof by Iwata and Yokoi, to some related jump system intersection theorems and to some open problems. 
\end{quote}

\begin{quote}
{\bf Keywords: routing, disjoint paths, Mader's theorem, jump systems, bisubmodular polyhedra }
\end{quote}
\vspace{5mm}

\section{Introduction} 

For basic notations and terminology we refer to Schrijver \cite{SYB}. Given an undirected graph, $G=(V,E)$, and $T\subseteq V$, a $T$-path is a path $P$ such that $V(P)\cap T$ consists of the two endpoints of $P$. Two $T$-paths, $P$, $Q$ are {\em openly disjoint}, if $V(P)\cap V(Q)\subseteq T$, and they are edge-disjoint if $E(P)\cap E(Q)=\emptyset$. An integer vector $m\in\mathbb{Z}^n$ is called  {\em vertex-feasible} or {\em edge-feasible} (for $(G,T)$ )   if there exists a set of openly vertex-, resp. edge-disjoint $T$-paths so that each $t\in T$ is the endpoint of $m(t)$ of them. Edge-feasible vectors are also  called {\em node-demands}, and have been studied in  \cite{FKS}, based on which the convex hull of edge-feasible vectors can be determined. We define jump systems on $T$: 

A set $J\subseteq \mathbb{Z}^T$ is a  {\em jump system}, if for each pair $x,y\in J$ and any step $x'$ from $x$ to $y$, either $x'\in J$, or there exists a step $x''$ from $x'$ to $y$ so that $x''\in J$. A {\em step} $x'$ from $x$ to $y$ is $x':=x$ if  $x=y$, or  $x':=x+e_s$ for some $s\in T$ such that $x_s<y_s$, or   $x':=x-e_s$ for some $s\in\{1,\ldots, n\}$ such that  $x_s>y_s$.  The {\em unit vector} $e_s\in \{0,1\}^T$ is defined by $e_s(s)=1$ and $e_s(t)=0$ if $t\in T\setminus \{s\}$. 

This simple notion has been defined by Bouchet and Cunningham \cite{BC}, generalizing  the delta-matroid-axioms defined earlier by Bouchet \cite{AB}, themselves generalizing matroid axioms. Besides (delta-)matroids -- the $0-1$ special case --,  one of the best-known examples of jump-systems presented in \cite{BC} are the degree sequences of graphs. 
Another example occurred to us in 1999-2000 \cite{MA}: feasible vectors for sets of openly vertex- or edge-disjoint $T$- paths.

 Then under the impact of Schrijver's simple proof  \cite{Sch} of Mader's theorem \cite{M2} and  following his proof of the matroid property of (inclusionwise) maximal feasible vectors in a slightly different, but equivalent, $0-1$ context -- presented at the winter-school ``New Methods in Discrete Mathematics" in Alpes d'Huez, March 2000 --, we have proved that sets of vertex- and edge-feasible  vectors  form  actually   jump systems. 
 
 The publication of our proof now was encouraged by some renewed interest and deep results concerning jump systems and their intersections. First, related to $T$-paths,  by Iwata and Yokoi \cite{IYhere}, and actually a draft  of their proof of   the jump-system property in it, in February 2022, that  follows  Lov\'asz's method for proving Mader's theorem \cite{M2} which is thus very different from our proof following Schrijver proof; second, by Dudycz and Paluch's results \cite{Kasia} concerning weighted general graph factors,  generalized by \cite{K} to optimize on some particular weighted jump system intersections. These make   worth summarizing some new and old connections in Section~\ref{sec:cons}, with pointers to graph factors to analogous results for $T$-paths, and to common generalizations.   Since our proof of the jump system property of $T$-paths is not easy to access (the only public access were lectures \cite{MA} and then a French thesis \cite {MS})  we decided to make it available in this note: it is in  Section~\ref{sec:main}. Some of the not completely recent corollaries and the conjecture of Section~\ref{sec:cons} also keep the actuality of the subject with enhanced connections, until today.


\section{The jump system of feasible vectors}\label{sec:main} 


Denote by $J_{\rm vertex}(G,T)$ and   $J_{\rm edge}(G,T)$ the set of all vertex-feasible and edge-feasible vectors respectively. 

Schrijver \cite[Theorem 73.5, page 1292]{SYB} considered the matroid property of  (inclusionwise) maximal  feasible vectors. The authors were lucky enough to hear this result and Schrijver's simple proof \cite{Sch}, \cite[Theorem~73.2]{SYB} of Mader's theorems \cite{M1}, \cite{M2} -- reducing them to a result of Gallai \cite{G}, itself shortly proved from Tutte's theorem on maximum matchings -- and its corollaries, ahead of time, at the winter-school ``New Methods in Discrete Mathematics" in Alpes d'Huez, March 2000. They were strongly interested, since in 1999 they have proved weaker results  \cite{MA} in the same direction, first about the convex hull of $J_{\rm edge}(G,T)$. We will discuss some still useful  connections of these results in Section~\ref*{sec:cons}, with a related open problem. 

Mader's theorem  on the maximum number of edge-disjoint $T$-paths \cite{M1} is a straightforward consequence of the vertex-version \cite{M2} by taking the line graph,  but no easy reduction is known in the other direction. 
For proving the matroid property or the jump-system property there is no need of neither theorems though,  and there is no  essential difference between the  proof of the vertex- or edge-version. The proof of these jump-system properties is much simpler than that of Mader's theorems, and each of the vertex- or edge-versions can be obtained by mimicking the other.

Schrijver chooses the vertex-version for the proof of his matroid-property.  We choose the edge version for a difference, for the sake of introducing the possibly useful idea of ``edge-transitions'' for edge-disjoint paths,  and also because the theory of edge-disjoint paths has been much further developped than that of vertex-disjoint paths \cite{Hu}, \cite{RW}, \cite{Ch}, \cite{LCh}, \cite{Lom}, \cite{FKS}: capacities can then be put on edges, and $T$-paths can be generalized, the linear constraints for the ``node demand polyhedron" of edge-disjoint paths have been determined, so there is more to say about the edge-version.  (Similar weighted generalizations involving node-capacities are in principle possible though for the vertex-versions as well, but  the corresponding decision problems are  $\cal NP$-hard,  and  no natural analogue of the parity condition is known to ensure tractability.)

An {\em edge-transition} is an (unordered) pair of incident edges (equivalently, an edge of the line graph), or a pair $(t,e)$, where $t\in T$ and $e=tv$ $(v\in V)$, i.e.  $e$ is  an edge incident to $t$.   If $\Pscr$ is a set of paths, the union of the edge-transitions of the paths in $\Pscr$  will be denoted by $\hat\Pscr$. If $\Pscr$ consists of edge-disjoint paths, each edge-transition is contained in at most one path. The following theorem and proof  have been exposed in lectures \cite{MA}, and appear in \cite{MS}.  For the proof we introduce one more notation:
the subpath of a path $P$ between two of its points $u,v$ is denoted by $P(u,v)$.

\begin{thm}\label{thm:main} Let $G=(V,E)$ be a graph, $T\subseteq V$. Then  $J_{\rm vertex}(G,T), J_{\rm edge}(G,T)$ are   jump systems. 
\end{thm}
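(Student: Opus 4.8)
The plan is to verify the two-step axiom for $J_{\rm edge}(G,T)$ directly; the argument for $J_{\rm vertex}(G,T)$ is obtained by mimicking it with openly-disjoint rerouting, as announced. So fix $x,y\in J_{\rm edge}(G,T)$, realized by families $\Pscr_x$ (whose edges and transitions I will call \emph{red}) and $\Pscr_y$ (\emph{blue}) of edge-disjoint $T$-paths, and consider a step $x'$ from $x$ to $y$, assuming $x\neq y$ (the case $x=y$ being trivial). I first record the parity observation that every $m\in J_{\rm edge}(G,T)$ has $\sum_{t\in T}m(t)=2|\Pscr|$ even, so every step $x'=x\pm e_s$ has odd coordinate-sum: hence $x'\notin J_{\rm edge}(G,T)$ in all cases, and the first alternative of the axiom never occurs. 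Consequently I must always produce a \emph{second} step $x''$ from $x'$ to $y$ lying in $J_{\rm edge}(G,T)$, and by the same parity $x''$ differs from $x$ in exactly two coordinates. Treating the case $x'=x+e_s$ with $x_s<y_s$ (the case $x'=x-e_s$, $x_s>y_s$, being symmetric under interchanging the roles of the two families), the target is a feasible vector $z$ with $z_s=x_s+1$ and $z_t=x_t\pm 1$ for a single $t\neq s$, the sign being toward $y$, i.e.\ $z$ closer to $y$ than $x$ in both changed coordinates.

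The construction is an augmenting reroute of $\Pscr_x$ guided by $\Pscr_y$. Since $x_s<y_s$, strictly more blue than red path-ends meet $s$, so there is a blue path-end, a transition $(s,g_1)\in\hat\Pscr_y$, available at $s$. Starting from it I build a trail $W=s,g_1,u_1,g_2,\dots,g_\ell,t$ in the superposition of the two edge sets, alternating between \emph{adding} blue edges (currently outside the red family) and \emph{removing} red edges, using the edge-transition structure to cross from a blue path to a red one, or back, at a shared non-terminal vertex; I stop $W$ at the first terminal $t$ it reaches, so that $u_1,\dots,u_{\ell-1}\notin T$. Rerouting along $W$ — toggling each of its edges into or out of the red family — produces a new edge set $E_z$; the point of carrying the transitions along $W$, rather than merely toggling edges, is that the transitions prescribe how to re-pair edges at each traversed vertex, so that $E_z$ again decomposes into edge-disjoint $T$-paths $\Pscr_z$ with no terminal used internally and no edge repeated. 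At $s$ exactly one blue edge is added, giving $z_s=x_s+1$; at every internal vertex one edge is added and one removed, so degrees there are preserved; and at $t$ exactly one edge is toggled, giving $z_t=x_t\pm1$. Because $W$ begins at $s\neq t$ and ends at $t$, it is an open trail, consistent with the parity remark that exactly two coordinates change.

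It remains to guarantee the two delicate properties on which the whole argument rests, and this is where I expect the real work to lie. First, that the reroute genuinely yields $T$-paths: interior vertices stay non-terminal because $W$ halts at the first terminal, and the transition bookkeeping keeps the paths edge-disjoint and free of the degenerate closed trails through a single terminal that a naive degree-count would wrongly admit. Second, and harder, that the far endpoint $t$ is \emph{deficient} toward $y$, so that $z_t=x_t\pm1$ points toward $y$ and $z$ is a legitimate second step; equivalently $\|z-y\|_1=\|x-y\|_1-2$. To force this I would choose the realizing families extremally — for instance among all $\Pscr_x,\Pscr_y$ realizing $x,y$ one minimizing $|E(\Pscr_x)\,\triangle\,E(\Pscr_y)|$ — and argue, in the spirit of Schrijver's augmenting argument for Mader's theorem, that a maximal alternating trail from $s$ cannot terminate at a terminal where the reroute would overshoot $y$: such a terminal would still carry an unused edge of the correct colour, contradicting maximality of $W$ or minimality of the chosen families. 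Establishing this termination-at-a-deficient-terminal is the crux; once it is in hand, $z$ is the required $x''\in J_{\rm edge}(G,T)$, the two-step axiom holds, and the identical argument with openly-disjoint rerouting settles $J_{\rm vertex}(G,T)$.
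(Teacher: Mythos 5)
Your setup is sound (the parity remark is correct, and the idea of rerouting $\Pscr_x$ guided by $\Pscr_y$ while measuring progress by shared edges/transitions is exactly the right spirit), but the proof has a genuine gap that you yourself flag and then leave open: the claim that a maximal alternating trail from $s$ must terminate at a terminal $t$ where the toggle $z_t=x_t\pm1$ points toward $y$. This is the entire content of the theorem, and no argument is given for it beyond ``such a terminal would still carry an unused edge of the correct colour.'' There is no reason the trail should end at such a terminal; indeed the paper's proof does \emph{not} establish any such claim and does not need to. Instead it performs a single controlled splice --- follow $P\in\Pscr_2$ from $s$ to the \emph{first} edge $e=uv$ it shares with some $Q\in\Pscr_1$ and replace $Q$ by $Q'=P(s,u)\cup Q(u,q)$ --- yielding a feasible $m_1+e_s-e_r$. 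If $r$ happens to be in surplus, $-e_r$ is already a valid second step; if not, the new vector is \emph{not} a valid second step, but the new family has strictly fewer transitions outside $\hat\Pscr_2$, so one recurses: the induction on $|\hat\Pscr_1\setminus\hat\Pscr_2|$ is what replaces your unproved termination claim. Without either that induction or a proof of your crux, the argument does not close.

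Two further points. First, you assert but do not prove that toggling the edges of $W$ leaves an edge set that decomposes into edge-disjoint $T$-paths with no terminal used internally and no closed components; for a long alternating trail through the superposition of two path systems this is delicate (the paper avoids it by splicing only once, at the first shared edge, and by the careful choice of the endpoint $q$ when $Q$ itself ends at $s$ --- the case its figure is devoted to). Second, the case $x'=x-e_s$ is \emph{not} symmetric to $x'=x+e_s$ under interchanging the two families: the two-step axiom is not symmetric in $x$ and $y$ (a step from $x$ toward $y$ is not a step from $y$ toward $x$). That case is in fact the easy one --- delete a path at $s$ and either its other endpoint is in surplus or one recurses --- but it needs its own short argument rather than an appeal to symmetry.
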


\begin{figure}[t]
	\centering
	\includegraphics[width=1\textwidth]{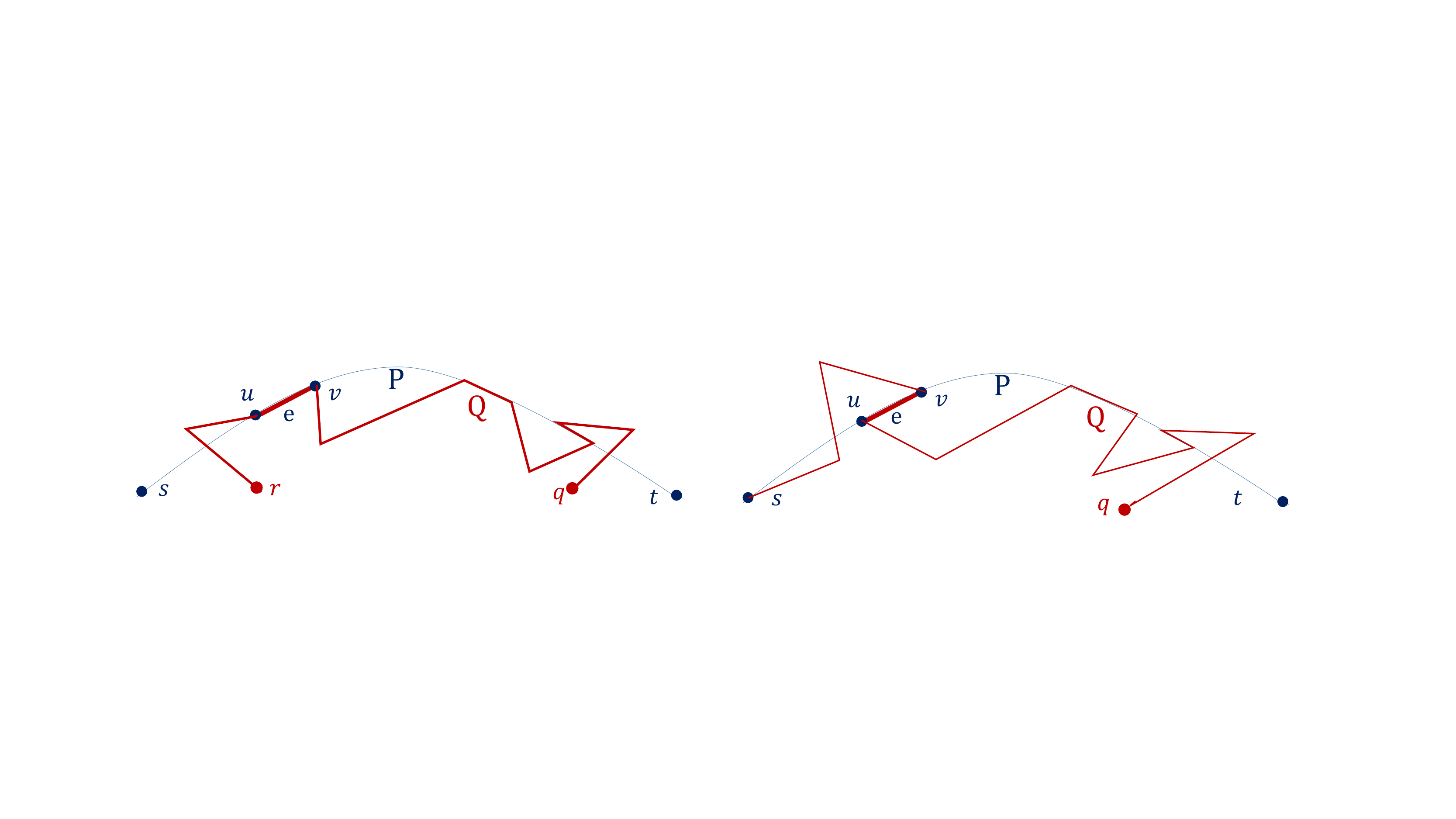}
	\vspace{-3.5cm}	\caption{ Explanation }
	\label{fig:paths}
\end{figure}

\begin{proof}  As explained above, the proof for $J_{\rm vertex}(G,T)$ and the one for  $J_{\rm edge}(G,T)$ can be obtained by mimicking one another: the most essential difference between the two is that the vertices in the former are becoming edges of the latter, and edges of the former become edge-transitions in the latter. We detail the full proof for $J_{\rm edge}(G,T)$. 
	
	Let $m_1$ and $m_2$ be two integer feasible vectors, and $\Pscr_1,\Pscr_2$ be a set of paths realizing them respectively.   We use induction with respect to $|\hat\Pscr_1\setminus \hat\Pscr_2|$ to prove that the $2$-step axiom holds for $m_1$ and $m_2$.  

\medskip\noindent
{\bf Case~1}:  The first step is $-e_s$, where $s\in T$, $m_1(s)>m_2(s)$.

We show then the $2$-step axiom for $-e_s$ as first step. If there exists an $(s,t)$-path $P\in\Pscr_1$, so that $m_1(t)>m_2(t)$, then $-e_t$ is a correct second step that is realized by deleting $P$ from $\Pscr_1$. If such a path does not exist, we delete a path having $s$ as an endpoint anyway, in the following way, keeping in mind that now $m_1(t)\le m_2(t)$ holds for the other endpoint of such a path:  

By  $m_1(s)>m_2(s)$  there exists a path  $P\in\Pscr_1$ whose first edge is incident to $s$ and  is not contained in any path of $\Pscr_2$.  Therefore, deleting $P$ from $\Pscr_1$, $|\hat\Pscr_1\setminus \hat\Pscr_2|$  decreases, enabling us to apply  the $2$-step axiom by  induction. Recalling $m_1(t)\le m_2(t)$, since after the deletion of the $P$, we get the feasible integer  vector $m_1'=m_1-e_s-e_t$, $m_1'(t)< m_2(t)$. Therefore we can apply the $2$-step axiom: for $e_t$ as first step there exists a feasible second step $\delta$.  In other words $m_1'':=m'_1+e_t+\delta=m_1-e_s-e_t+e_t+\delta=m_1-e_s+\delta$  is still a feasible integer vector,  $m_1''(t)=m_1(t)$, so $\delta$ is the second step we were looking for.

\medskip\noindent
{\bf Case~2}:  The first step is $e_s$, where $s\in T$, $m_1(s)<m_2(s)$.

We show then the $2$-step axiom for $e_s$ as first step. If there exists $t\in T$ along with an $(s,t)$-path $P\in\Pscr_2$ such that   $P$ is edge-disjoint from all paths in $\Pscr_1$, then 
\begin{itemize}
\item[-] either $m_1(t)<m_2(t)$ and then $e_t$ is a correct second step  realized by adding $P$ to  $\Pscr_1$.
\item[-] or  $m_1(t)\ge m_2(t)$, and then we show that there exists $Q\in\Pscr_1$ with endpoint $t$, and that we can apply the induction hypothesis to $(\Pscr_1\setminus \{Q\})\cup \{P\}$ to finish the proof. 
\end{itemize}
Indeed, since $P\in\Pscr_2$ is edge-disjoint from all paths in $\Pscr_1$, strictly less than  $m_1(t)$ edges incident to $t$ are used by both  $\Pscr_1$ and  $\Pscr_2$, so  there exists   $Q\in\Pscr_1$   whose edge incident to $t$ is not used by any path of $\Pscr_2$. The number of transitions of  $(\Pscr_1\setminus \{Q\})\cup \{P\}$ not contained in $\Pscr_2$ is smaller than $|\hat \Pscr_1\setminus\hat\Pscr_2|$, because $P$ is in $\Pscr_2$ so the union with $P$ does not add anything, and by the choice of $Q$, the deletion of $Q$ deletes at least one transition. 
Moreover, $\Pscr_1\setminus \{Q\}\cup \{P\}$ realizes the integer vector $m_1':=m_1+e_s-e_q$, where $q$ is the other endpoint of  $Q$, so 
\begin{itemize}
	\item[-] if  $m_1(q)\le m_2(q)$, then $m_1'(q)<m_2(q)$ and we can apply the induction hypothesis for $m_1'$, $m_2$ and $e_q$ as first step. 

With the second step $\delta$, $m_1'+e_q + \delta=m_1+e_s + \delta$ is then a feasible integer vector so $\delta$ is  a good second step for $m_1$ and $m_2$ and $e_s$ as first step, 
	\item[-] if  $m_1(q)>m_2(q)$, then the feasibility of $m_1'$ means that $-e_q$ is a good second step.
\end{itemize}
We finally suppose, still under the condition of Case~2,  that there is no path with endpoint $s$ in $\Pscr_2$, which is edge-disjoint from all paths in $\Pscr_1$. Since  $m_1(s)<m_2(s)$, there exists $P\in\Pscr_2$ with an edge incident to $s$ not used by any  path of $\Pscr_1$. By our assumption, there exists an edge $e=uv$ of $P$ also contained in a path $Q\in \Pscr_1$, which is thus not incident to $s$, and suppose that starting from $s$ on $P$, $e$ is the first such edge we meet, and that we meet $u$ before $v$. Thus neither $u$ nor $v$  are  in $T$. 

Let $q\in T$ be the endpoint of $Q$ so that $Q(u,q)$ contains $e$ (Figure~\ref{fig:paths} left), unless this endpoint is $s$, in which case define $q$ to be the other endpoint of $Q$ (Figure~\ref{fig:paths} right).  So $q\ne s$ anyway, and  $Q':=P(s,u)\cup Q(u,q)$ is a $T$-path disjoint from all paths of $\Pscr_1\setminus \{Q\}$. Now similarly to our repeated arguments, $(\Pscr_1\setminus \{Q\})\cup\{Q'\}$ shows that $m_1':=m_1 +  e_s - e_r$ is a feasible integer vector, where $r$ is the other endpoint of $Q$.  So $-e_r$ is  a correct second step, provided  $m_1(r)>m_2(r)$;  on the other hand, if $m_1(r)\le m_2(r)$,  then $m_1'(r) < m_2(r)$,  and we try to apply the $2$-step axiom with  $e_r$ as first step. For this, it is sufficient to prove the Claim below, because that allows us to apply the induction hypothesis.

We will then be done, since for $m_1', m_2,$  with a first step $e_r$ and    second step $\delta$ we have that  $m_1+e_s-e_r+e_r+\delta=m_1+e_s+\delta$ is a feasible integer vector, finishing the verification of the $2$-step axiom for Case~2, and therewith of our theorem. Indeed, then in our last case we can conclude the first step $e_s$ with the second step $\delta$. So the following Claim finishes the proof of the theorem: 

\medskip\noindent{\bf Claim}:  $|(\hat\Pscr_1\setminus \hat Q)\cup \hat Q')\setminus \hat \Pscr_2| < |\hat\Pscr_1\setminus \hat\Pscr_2|$. 

\smallskip To prove this claim note first that all edge-transitions of the path $P(s,u)$ are contained both in $Q'$ and $P\in\Pscr_2$, so they are not counted in   $|(\hat\Pscr_1\setminus \hat Q)\cup \hat Q')\setminus \hat \Pscr_2|$;  therefore there is no difference in the set of transitions of $P(s,u)$ and $Q(u,q)\subseteq Q$  between $Q$ and $Q'$. Therefore it is sufficient to examine the transitions through vertex $u$.    

Note that  the edge-transitions through $u$ decrease the induction parameter $|(\hat\Pscr_1\setminus \hat Q)\cup \hat Q')\setminus \hat \Pscr_2|$ by $1$ when   $Q$ is replaced by  $Q'$, if and only if $Q'$ uses the same edge-transition in $u$  as $P$, that is, if and only if $e\in Q'$ (Figure~\ref{fig:paths} left), and then  we are done by the induction hypothesis. 
On the other hand, by our choice,   $e\notin Q'$ happens only if $r=s$ (Figure~\ref{fig:paths} right), and then the edge-transition of $Q$ in $v$, which contains  $e$  is counted in $|\hat\Pscr_1\setminus \hat\Pscr_2|$ unless $v$ is an endpoint of $Q$,  and is no more contained   $|(\hat\Pscr_1\setminus \hat Q)\cup \hat Q')\setminus \hat \Pscr_2|$. But $v$ is indeed, not an endpoint of $Q$, since $e$ has been chosen not to be incident to $s$.    
\end{proof}

\section{Context and Consequences}\label{sec:cons}

The  consequences of Mader's theorems \cite{M1}, \cite{M2} for capacitated cases (that can be deduced by parallel edges or replications)  are well-known, and Hu's \cite{Hu}, Rothchild and Whinston's \cite{RW},  \cite{Lom}, $\ldots$, and also  later  generalizations  concern arbitrary capacities. These raise new algorithmic questions though,  beyond the size and ambitions of the present work. We therefore continue to restrict ourselves to the uncapacitated case knowing that from the viewpoint of theorems and structure the capacitated case is equivalent: for instance  an integer edge-capacity can be simulated by the same number of parallel edges. 

Let $G=(V,E)$ be a graph, and $T\subseteq V$. Definitions of feasible vectors replacing sets of openly vertex-disjoint $T$-paths by entirely vertex-disjoint paths joining  different classes of a partition $\cal T$  of $T$ are also easily seen to lead to  equivalent feasibility problems. (In terms  of combinatrial structure, while algorithmically, with binary encoding,  this needs more explanations.)  We will call such paths $\cal T$-paths. If a path is both a   $\Tscr_1$- and $\Tscr_2$-path for partitions $\Tscr_1, \Tscr_2$ of $T$, it will be  said to be a $\Tscr_1$-$\Tscr_2$-path.  The set of vertex- or edge-{\em feasible vectors for $\Tscr_1-\Tscr_2$} is defined analogously to that for $\Tscr$, leading to the study of  some particular jump system intersections. Vertex-feasible vectors for $\Tscr$ or for  $\Tscr_1-\Tscr_2$ are $0-1$ vectors and Schrijver proved that the maximal ones among them form the bases of a matroid;  
Theorem~\ref{thm:main} sharpens this to the fact that the vertex-feasible vectors for $\Tscr$ form a delta-matroid. In the rest of this article we focus on edge-feasible vectors.  

We can also define {\em relaxed-feasibility} for $\Tscr$-paths or for $\Tscr_1$-$\Tscr_2$-paths by extending the definition to not necessarily integer vectors in $\mathbb{R}^T$ with the existence of  (not necessarily integer) coefficients for each path so that the sum of coefficients containing each $e\in E$ is at most $1$.     We state here some preliminaries to Theorem~\ref*{thm:main} from \cite{MA}, \cite{MS} without proof details, but  point at  connections and  open problems related to these.   The presentation of jump-systems by these early  results is weaker than Theorem~\ref{thm:main}:   taking all  integer vectors in  the convex hull of $J_{\rm edge}(G,T)$, or the assumption of a parity condition are essential weakenings.   However, further  facts, an intersection theorem and an intriguing conjecture can be exhibited for these restricted jump systems, with interesting,  also algorithmic consequences on edge-disjoint $T$-paths. 

\begin{thm}\label{thm:FKS} Given the graph $G=(V,E)$,  $T\subseteq V$, and a partition $\cal T$ of $T$, the vertices of the polytope
	\[Q(G,\Tscr) := \{m\in \mathbb{R_+}^T:  m(X\cap T') - m(X\cap T\setminus T')\le d(X) \hbox{ for all $X\subseteq V$, $T'\in\Tscr$} \}   \]
are integer, its integer points form a jump sytem, and $Q(G,\Tscr$) is  the set of relaxed-feasible vectors. 
\end{thm}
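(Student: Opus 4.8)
The plan is to prove the three assertions in the order (i)~$Q(G,\Tscr)$ is exactly the set of relaxed-feasible vectors, (ii)~its vertices are integer, and (iii)~its integer points form a jump system, reading off the last two from the (bi)submodular structure of the defining inequalities once (i) is in hand. Throughout write $m(S)=\sum_{t\in S}m(t)$ for $S\subseteq T$, and let $\delta(X)$ be the set of edges crossing $X$, so that $d(X)=|\delta(X)|$.

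First I would settle the easy inclusion, that every relaxed-feasible $m$ lies in $Q(G,\Tscr)$, by cut counting. Fix $X\subseteq V$ and a class $T'\in\Tscr$, and let $\{\lambda_P\}$ be nonnegative coefficients of a relaxed packing realizing $m$, so that $\sum_{P\ni e}\lambda_P\le 1$ for each edge $e$ and $\sum_{P\ \mathrm{ends\ at}\ t}\lambda_P=m(t)$ for each $t\in T$. For a single $\Tscr$-path $P$ one checks, distinguishing whether $0$, $1$ or $2$ of its endpoints lie in $X$, that its signed endpoint contribution to $m(X\cap T')-m(X\cap(T\setminus T'))$ never exceeds the number of edges of $P$ in $\delta(X)$: the two endpoints of a $\Tscr$-path lie in distinct classes, hence at most one of them is counted with a $+$ sign, so the contribution is at most $1$ when exactly one endpoint is in $X$ and at most $0$ when both are, while the number of crossings is at least $1$, respectively at least $0$ (and even). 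Weighting by $\lambda_P$ and summing over $P$ gives $m(X\cap T')-m(X\cap(T\setminus T'))\le\sum_{e\in\delta(X)}\sum_{P\ni e}\lambda_P\le d(X)$, as required.

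The reverse inclusion is the substantial content and is where I expect the main obstacle. I would express relaxed-feasibility of a given $m\ge 0$ as the feasibility of a $\Tscr$-path packing LP and pass to its dual, which assigns nonnegative lengths $y\colon E\to\mathbb{R}_+$ to the edges; by LP duality, $m$ fails to be relaxed-feasible only if some length function makes the total length $\sum_e y_e$ small compared with $\sum_t m(t)$ times the $y$-cost of reaching $t$ from another class. The crux is to prove that an extremal such dual solution may be taken to be a cut metric $y_e=\mathbf 1[e\in\delta(X)]$ for a single set $X\subseteq V$; this collapses the dual certificate to exactly one inequality of $Q(G,\Tscr)$, contradicting $m\in Q(G,\Tscr)$. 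Proving that the fractional dual optimum is attained on $0/1$ cut metrics is the genuine difficulty, and it is precisely the multiflow min-max / node-demand result underlying \cite{FKS}; it is also what makes this relaxed statement weaker than, and easier to separate from, the integral Theorem~\ref{thm:main}.

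For integrality (ii) I would uncross: for a fixed class $T'$ the function $X\mapsto m(X\cap T')-m(X\cap(T\setminus T'))$ is modular, so two constraints of the same class tight at $X$ and $Y$ are forced, by submodularity of $d$ together with $m\in Q(G,\Tscr)$, to be tight also at $X\cap Y$ and $X\cup Y$; the tight sets of each class thus form a lattice family, on which the system is totally dual integral, and since the right-hand sides $d(X)$ are integral the vertices are integral. For the jump-system property (iii) I would exhibit the inequalities $m(S_+)-m(S_-)\le d(X)$, with $S_+\subseteq T'$ and $S_-\subseteq T\setminus T'$, as presenting $Q(G,\Tscr)$ by an integer-valued bisubmodular function (the minimum of $d(X)$ over the admissible $X$ for each signed pair), whence by Bouchet and Cunningham \cite{BC} its integer points form a jump system. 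The one technical point to control here, secondary to the min-max above, is that the sign of each coordinate is tied to which class $T'$ it belongs to, so both the uncrossing in (ii) and the verification of bisubmodularity in (iii) must be carried out across the different classes of $\Tscr$ and not merely within a single class.
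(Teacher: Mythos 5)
Your plan follows essentially the same route as the paper, which itself states Theorem~\ref{thm:FKS} as a preliminary imported from \cite{FKS}, \cite{MA}, \cite{MS}: the easy inclusion by cut counting (the only part the paper spells out), the reverse inclusion deferred to the multiflow min--max of \cite[Theorem 6.1]{FKS}, and integrality plus the jump-system property read off from the bisubmodularity of the defining system via Bouchet--Cunningham \cite{BC}, exactly as in Theorem~\ref{thm:MA} and Corollary~\ref{cor:bisub}. Be aware that, like the paper, you have not actually carried out the two substantive steps -- the cut-metric duality behind the reverse inclusion and the cross-class verification of the bisubmodular exchange inequality -- so your text is an accurate proof outline with the same external dependencies rather than a self-contained argument.
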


It is easy to check that the inequalities defining $Q(G,\Tscr)$ are satisfied by any relaxed-feasible vector. 


Attention!  The   encouraging facts stated in the theorem do not imply that integer points  in $Q(G,\Tscr)$ are feasible.  Actually not all of them are, even though the membership oracle for $J_{\rm edge}(G,T)$ can be straightforwardly reduced to Mader's theorem. The situation is more difficult for the intersection of such jump systems, as we try to show  with the following results and conjecture.  

The linear inequalities describing $Q(G,\Tscr)$ are from \cite[Theorem 6.1]{FKS}, where it is also proved that the integer vectors $m\in Q(G,\Tscr)$ for which $m+d_G$ is even (in this sum $m$ is defined to be $0$ on $V\setminus T$) for all $v\in V$, are  feasible. Furthermore, the same is true for   $m\in Q(G,\Tscr_1)\cap Q(G,\Tscr_2)$, where $\Tscr_1$, $\Tscr_2$ are two partitions of $T$, (and actually even more generally). This is in the line of multiflow maximization results of  Hu \cite{Hu}, Rothschild and Whinston \cite{RW}, Cherkasski\u{\i} and Lov\'asz \cite{Ch}, \cite{LCh},  Karzanov and Lomonosov \cite{Lom} {\em under   parity constraints on the degrees}, and if the parity constraint is not supposed, only a half-integer solution can be stated. (Such a half-integer solution thus exists for all relaxed feasible vectors.) A merit of \cite{FKS} is to introduce  vectors on $T$ (``node-demands'') as an intermediate tool. Then  the goal of maximization can be achieved using matroid intersection,  implying the minmax theorems corresponding to all the mentioned results.


A {\em bisubmodular polyhedron} is a polyhedron of the form
\[Q(b):=\{x\in\mathbb{R} :\,  x(A)-x(B)\le b(A,B),\, x\ge 0\},\]
where $b$ is defined on pairs of disjoint sets  and  has values in $\mathbb{N}$, moreover it is {\em bisubmodular}, that is: 
\[f(A,B)+f(A',B')\ge f(A\cap A', B\cap B') +f((A\cup A')\setminus (B\cup B'), (B\cup B')\setminus (A\cup A')).\]

Denote by  conv$(X)$ the convex hull of the set $X\subseteq \mathbb{R}^n$.   

\begin{cor}\label{cor:bisub}
If $G$ is an arbitrary unirected graph and $T\subseteq V$, {\rm conv}$(J_{\rm vertex}(G,T))$ and {\rm conv}$(J_{\rm edge}(G,T))$ are bisubmodular polyhedra. 
\end{cor}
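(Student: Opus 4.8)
The plan is to derive the statement from Theorem~\ref{thm:main} together with the general correspondence, due to Bouchet and Cunningham \cite{BC}, between \emph{bounded} jump systems and integral bisubmodular polyhedra. First I would record that both sets are bounded: for every $t\in T$ at most $d_G(t)$ edge-disjoint (resp.\ at most one openly vertex-disjoint, giving a $0$--$1$ coordinate) $T$-paths can have $t$ as an endpoint, so $m(t)\le d_G(t)$ and $m\ge 0$ for every feasible $m$. Hence $J:=J_{\rm edge}(G,T)$ (and likewise $J_{\rm vertex}(G,T)\subseteq\{0,1\}^T$) is a finite set of nonnegative integer vectors, $\mathrm{conv}(J)$ is a polytope contained in the nonnegative orthant, and all its vertices lie in $J$. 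By Theorem~\ref{thm:main}, $J$ is a jump system.

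Next I would produce the candidate bisubmodular description explicitly. For each pair of disjoint sets $A,B\subseteq T$ put $b(A,B):=\max\{x(A)-x(B):x\in J\}$, finite by boundedness. The inclusion $\mathrm{conv}(J)\subseteq Q(b)$ is immediate, since every $x\in J$ satisfies $x(A)-x(B)\le b(A,B)$ and $x\ge0$, and these linear constraints survive convex combinations. The two remaining assertions --- that $b$ is bisubmodular and that the reverse inclusion $Q(b)\subseteq\mathrm{conv}(J)$ holds --- are exactly the content of the jump-system/bisubmodular correspondence, and I would obtain both from the $2$-step axiom. Bisubmodularity of $b$ is the bisubmodular analogue of the submodularity of the matroid rank function: given witnesses $u,w\in J$ attaining the two right-hand-side values $b(A\cap A',B\cap B')$ and $b((A\cup A')\setminus(B\cup B'),(B\cup B')\setminus(A\cup A'))$, one jumps from $u$ toward $w$ (and back), using the axiom to change one or two coordinates at a time, so as to redistribute the positive and negative contributions and land on two vectors of $J$ that are feasible witnesses for $(A,B)$ and for $(A',B')$; the set identities make the bookkeeping close. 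The equality $Q(b)=\mathrm{conv}(J)$ then follows because maximizing any linear objective $c$ over $Q(b)$ is achieved greedily along coordinate and $\pm$-pair steps sanctioned by the jump axiom, so every supporting hyperplane of $\mathrm{conv}(J)$ has the form $x(A)-x(B)=b(A,B)$ or $x_t=0$.

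The main obstacle is precisely this correspondence --- proving that the convex hull of a bounded jump system is a bisubmodular polyhedron --- and within it the two-coordinate exchange step. Unlike the matroid case, where a single-element exchange suffices, the bisubmodular inequality couples the $A\cap A'$ term with the symmetric-difference term, so one must move a $+$ in one coordinate and a $-$ in another simultaneously while preserving membership in $J$, which is exactly what the $2$-step (rather than $1$-step) axiom provides. A geometric reformulation of the same point is that the jump axiom forbids $\mathrm{conv}(J)$ from having any edge whose direction is not one of $e_s$, $e_s-e_{s'}$, $e_s+e_{s'}$: an edge in any other primitive lattice direction would have endpoints $p,q\in J$ between which no single- or double-step path stays in $J$, contradicting Theorem~\ref{thm:main}. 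Once this is in hand, the vertex-case is the $0$--$1$ specialization (recovering Schrijver's delta-matroid statement) and the edge-case is identical, the only extra input being the bound $m(t)\le d_G(t)$ already noted.
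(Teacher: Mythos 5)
Your proof is correct and follows essentially the same route as the paper: the corollary is obtained by combining Theorem~\ref{thm:main} with the Bouchet--Cunningham theorem that the convex hull of a jump system is a bisubmodular polyhedron, which the paper simply cites as a black box while you additionally sketch its internal proof. (One small slip: $J_{\rm vertex}(G,T)$ is not a $0$--$1$ set, since several openly disjoint $T$-paths may share an endpoint $t\in T$; but the bound $m(t)\le d_G(t)$ you use for the edge case gives boundedness there as well, so nothing in the argument is affected.)
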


This shows one of the utilities of  $J$  being a jump system, having the consequence that linear objective function can be optimized with a natural  greedy algorithm in an appropriate oracle context \cite{BC} satisfied by the combinatorial examples we know about. 

\begin{proof}
By Theorem~\ref{thm:main}  $J_{\rm vertex}(G,T)$ and  $J_{\rm edge}(G,T)$ are jump systems, and Bouchet, Cunningham \cite{BC} proved that the convex hull of each jump system is a bisubmodular polyhedron. 
\end{proof}

\begin{thm}\label{thm:MA}   $Q(G,\Tscr)$ is a bisubmodular polyhedron. If all degrees of $G$ are even, then for any two partitions $\Tscr_1$, $\Tscr_2$ of $T$,   $Q(G,\Tscr_1)\cap Q(G,\Tscr_2)$ has integer vertices, and the maximum of the sum of coordinates on this intersection is achieved on a feasible vector computable in polynomial time.
\end{thm}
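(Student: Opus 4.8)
The plan is to prove Theorem~\ref{thm:MA} in three logically separable pieces, assembling results stated earlier in the excerpt.

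First I would establish that $Q(G,\Tscr)$ is a bisubmodular polyhedron. By Theorem~\ref{thm:FKS}, $Q(G,\Tscr)$ is the set of relaxed-feasible vectors and its integer points form a jump system. The defining inequalities $m(X\cap T')-m(X\cap T\setminus T')\le d(X)$ are already in the form $x(A)-x(B)\le b(A,B)$ with $A=X\cap T'$ and $B=X\cap T\setminus T'$; so the bisubmodular structure should follow by verifying that the right-hand side function, as a function of the pair $(A,B)$ ranging over the sets arising this way, satisfies the bisubmodular inequality. Here the natural candidate is to express $b(A,B)$ via the cut function $d(\cdot)$ and exploit submodularity of $d$, which is where the combinatorial content lives. (Alternatively one may invoke Corollary~\ref{cor:bisub} together with the identification of integer points of $Q(G,\Tscr)$ with a jump system, but the cleanest route is directly through the cut inequalities.)

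Second I would treat the integrality of the intersection $Q(G,\Tscr_1)\cap Q(G,\Tscr_2)$ under the even-degree hypothesis. The key is that intersecting two bisubmodular polyhedra does not in general preserve integral vertices, so the even-degree assumption must do real work. I would therefore lean on the multiflow results cited in the excerpt -- those of Hu~\cite{Hu}, Rothschild and Whinston~\cite{RW}, Cherkasski\u{\i} and Lov\'asz~\cite{Ch},~\cite{LCh}, and Karzanov and Lomonosov~\cite{Lom} -- which guarantee integral optima precisely under parity constraints on degrees. Concretely, the result from \cite{FKS} quoted after Theorem~\ref{thm:FKS} states that an integer $m\in Q(G,\Tscr_1)\cap Q(G,\Tscr_2)$ with $m+d_G$ even is feasible; the even-degree hypothesis on $G$ makes the parity condition vacuous (or uniformly satisfiable), so every integer point of the intersection becomes feasible. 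To conclude that the \emph{vertices} are integral, I would argue that a fractional vertex would violate this by producing, via the relaxed-feasibility characterization of $Q(G,\cdot)$, a half-integral structure that the parity condition rules out; this is the step I expect to be the main obstacle, since it requires transferring the parity-based integrality of the optimization problem back to the polyhedral vertices of the intersection.

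Finally, for the algorithmic statement, I would maximize $\sum_{t\in T} m(t)$ over $Q(G,\Tscr_1)\cap Q(G,\Tscr_2)$ by the matroid-intersection approach that \cite{FKS} introduces with node-demands as the intermediate tool. Since both $Q(G,\Tscr_i)$ arise from bisubmodular (equivalently, polymatroid-type after a standard transformation) structure, maximizing a linear objective over their intersection reduces to a weighted matroid intersection, solvable in polynomial time; by the integrality established in the second step the optimum is attained at an integer vertex, and by the feasibility transfer (parity being satisfied because degrees are even) that integer vertex is a genuinely feasible vector realizable by edge-disjoint $\Tscr_1$-$\Tscr_2$-paths. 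The remaining care is to confirm that the oracle calls needed for the matroid-intersection algorithm -- membership and separation for $Q(G,\Tscr_i)$ -- are themselves polynomial, which reduces to Mader-type min-cut computations as noted in the excerpt.
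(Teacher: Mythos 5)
Your first and third pieces match the paper's proof in substance: the bisubmodularity of $Q(G,\Tscr)$ is indeed obtained by noting the $\pm 1$ constraint matrix and verifying the bisubmodular inequality directly from the cut function (the paper omits the computation, pointing to \cite{MS}), and the algorithmic statement about maximizing the sum of coordinates is simply delegated to \cite[p.~165, Proof of Theorem 4.3]{FKS}, which is the matroid-intersection-with-node-demands route you sketch.

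The genuine gap is in your second piece, and you have correctly located it yourself: you have no actual argument that the vertices of $Q(G,\Tscr_1)\cap Q(G,\Tscr_2)$ are integral. Feasibility of suitably parity-constrained integer points of the intersection is logically independent of the integrality of its vertices --- a polytope can contain many feasible integer points and still have fractional vertices --- so no amount of ``transferring the parity-based integrality of the optimization problem back to the vertices'' will close this. (Moreover, the even-degree hypothesis does not make the parity condition vacuous: if $d_G$ is even, then $m+d_G$ even forces $m$ itself to be even, so it is only the \emph{even} integer points, not all integer points, whose feasibility \cite{FKS} guarantees.) The paper's mechanism is entirely different and is the idea you are missing: by Cunningham's theorem \cite{C}, \cite{SDONET}, the intersection of two bisubmodular polyhedra is half-integral. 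When all degrees of $G$ are even, the bisubmodular functions $b_1,b_2$ defining $Q(G,\Tscr_1)$ and $Q(G,\Tscr_2)$ take only even values, so $b_1/2$ and $b_2/2$ are again \emph{integer} bisubmodular functions; applying the half-integrality theorem to the intersection of the polyhedra they define and scaling back by the factor $2$ yields that $Q(G,\Tscr_1)\cap Q(G,\Tscr_2)$ has integer vertices. This scaling trick is what converts the parity hypothesis into polyhedral integrality; the multiflow theorems of Hu, Rothschild--Whinston, Cherkasski\u{\i}--Lov\'asz and Karzanov--Lomonosov that you invoke are the surrounding context, not the proof.
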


\begin{proof} For checking fact that $Q(G,\Tscr)$ is a bisubmodular polyhedron note first that it is defined by a $\pm 1$ constraint matrix.  Then the bisubmodular inequality can be verified directly \cite{MA}, we omit the details here, they are included in \cite{MS}.
	 
	 If all degrees are even, the bisubmodular function defining  $Q(G,\Tscr)$ has only even values. According to Cunningham \cite{C} {\em the intersection of bisubmodular polyhedra is half-integer.} The original proof of this used polyhedral arguments, and was not more difficult than  apparently the   only proof that appeared publicly, in \cite{SDONET}. It is deduced there  from a more general conjecture of Cunningham for jump systems,  and then  Cunningham's conjecture is settled using results in \cite{LLjump}.  
	
If all degrees of $G$ are even, then both bisubmodular functions  $b_1$ and $b_2$  defining $\Tscr_1$ and $\Tscr_2$ are even, so $b_i/2$ $(i=1,2)$ are integer bisubmodular functions. Applying the already established half-integrality for the intersection of the polytopes defined by $b_i/2$ $(i=1,2)$, we get that $Q(G,\Tscr_1)\cap Q(G,\Tscr_2)$ is an integer polyhedron. The last statement neecessitates a completely different proof method, it is proved in \cite[page 165 Proof of Theorem 4.3]{FKS}. 
	\end{proof}

The proof of the first statement of Theorem~\ref{thm:MA} is   easier than that of Theorem~\ref{thm:main}, which has been proved almost a year later.  The latter does not easily imply  though the former: integer vectors of  $Q(G,\Tscr)$ are not necessarily edge-feasible without the parity condition (Mader's odd cuts  also play then a role for feasibility).  Accordingly, integer vectors of  $Q(G,\Tscr)$ are not necessarily equal to conv$(J_{\rm edge}(G,T) )$.

However, if the degree of every vertex of $G$ is even, the bisubmodular functions defining $Q(G,\Tscr)$ are even,  the optimum $m$ on $Q(G,\Tscr)$ is an even vector for any objective function,  and then \cite[Theorem 6.1]{FKS} explicitly establishes the feasibility of $m$. It actually does so for all even vectors  $m\in Q(G,\Tscr_1)\cap Q(G,\Tscr_2)$. (For the $\Tscr=\Tscr_1=\Tscr_2$  special case that we need here, an easy reduction to \cite{LCh}, \cite{Ch} is actually sufficient and can be realized by adding a copy $t'$ of each terminal, and $m(t)$ paralel $tt'$ edges. The same reduction works for testing membership in the jump systems $J_{\rm edge}(G,T)$, $J_{\rm vertex}(G,T)$ in polynomial time using any algorithm for maximizing $T$-paths.)

For $\Tscr_1-\Tscr_2$-paths in graphs whose degrees are not all even,  we would need to generalize Mader's theorem to $\Tscr_1-\Tscr_2$-paths, and such a generalization does not exist since the $2$-flow problem is already $\mathcal NP$-hard. Now $m\in Q(G,\Tscr_1)\cap Q(G,\Tscr_2)$   is not even any more so \cite{FKS} cannot be applied (see \cite{FKS} for further explanations and examples). Nevertheless, for maximixing the sum of coordinates \cite{FKS} presents a patch, expressed in the     
last statement of Theorem~\ref{thm:MA}, and suggesting that  the same may also be true for {\em all vertices} of the polytope  $Q(G,\Tscr_1)\cap Q(G,\Tscr_2)$, making possible the optimization of any objective function on $\Tscr_1-\Tscr_2$-feasible vectors if the following conjecture holds:

\begin{conj}\label{conj}  If all degrees of $G$ are even, then for any two partitions $\Tscr_1$, $\Tscr_2$ of $T$, the vertices of $Q(G,\Tscr_1)\cap Q(G,\Tscr_2)$ are $\Tscr_1-\Tscr_2$-feasible, i.e.  $Q(G,\Tscr_1)\cap Q(G,\Tscr_2)$ is the convex hull of $\Tscr_1-\Tscr_2$-feasible vectors. 
\end{conj}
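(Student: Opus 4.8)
The plan is to establish the two inclusions of the claimed identity $Q(G,\Tscr_1)\cap Q(G,\Tscr_2)=\mathrm{conv}(\{\Tscr_1\text{-}\Tscr_2\text{-feasible vectors}\})$ separately. The inclusion ``$\supseteq$'' is the routine direction: every $\Tscr_1$-$\Tscr_2$-path is simultaneously a $\Tscr_1$-path and a $\Tscr_2$-path, so a packing of $\Tscr_1$-$\Tscr_2$-paths realizing a feasible vector $m$ is at the same time a relaxed-feasible packing for $\Tscr_1$ and for $\Tscr_2$; by Theorem~\ref{thm:FKS} this gives $m\in Q(G,\Tscr_1)\cap Q(G,\Tscr_2)$, and taking convex hulls yields ``$\supseteq$''.

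For the reverse inclusion I would first invoke Theorem~\ref{thm:MA}: since all degrees of $G$ are even, $Q(G,\Tscr_1)\cap Q(G,\Tscr_2)$ has integral vertices, so it suffices to prove that every integer vertex $v$ of the intersection is $\Tscr_1$-$\Tscr_2$-feasible. As the intersection is a polytope (the constraints with $X=\{t\}$ give $m(t)\le d(t)$), each vertex is the unique maximizer of some linear objective $c$, and the problem reduces to the statement that for every $c$ the maximum of $c^\top m$ over $Q(G,\Tscr_1)\cap Q(G,\Tscr_2)$ is attained at a $\Tscr_1$-$\Tscr_2$-feasible vector. The case $c=\mathbf 1$ is precisely the last assertion of Theorem~\ref{thm:MA}, proved in \cite{FKS}; the whole difficulty is to upgrade this single objective to an arbitrary one.

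Two routes seem natural. In the first (a reduction to the known case) one would encode an integral objective $c\ge 0$ into an auxiliary even-degree graph $G'$ by replicating or attaching gadgets at each terminal $t$ so that maximizing the plain sum of coordinates over $G'$ reproduces the weighted maximization of $c^\top m$ over $G$, and then apply the $\mathbf 1$-case of Theorem~\ref{thm:MA} to $G'$. In the second (a direct, augmenting route) one would start from the $\mathbf 1$-optimal feasible vector supplied by Theorem~\ref{thm:MA} and move toward the target vertex $v$ through feasible jump-steps, re-routing the common paths by an exchange argument in the spirit of the proof of Theorem~\ref{thm:main}, the even-degree hypothesis furnishing (via an Eulerian/parity argument) the slack needed to keep each intermediate vector realizable by genuine $\Tscr_1$-$\Tscr_2$-paths.

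The main obstacle is exactly the point where the clean theory stops. The feasibility result of \cite{FKS} rests on the vector being \emph{even}, which holds for the sum-of-coordinates optimum but fails for a general vertex of the intersection, as stressed above; without evenness Mader's odd-cut obstructions re-enter, and the matroid-intersection / half-integrality machinery that yields integrality of the vertices no longer certifies an \emph{integral} packing of common paths. Since packing $\Tscr_1$-$\Tscr_2$-paths is $\mathcal{NP}$-hard once the even-degree assumption is dropped, any proof must use the parity of the degrees in an essential, structural way to turn the two separate fractional certificates (double relaxed-feasibility) into one integral family of paths valid for both partitions at once. In the first route the wall is to simulate an arbitrary objective by unweighted path-maximization while simultaneously respecting both partitions $\Tscr_1,\Tscr_2$ (equivalently their common refinement) and keeping all degrees even; in the second route it is to guarantee that every intermediate vector on the way from the $\mathbf 1$-optimum to $v$ remains feasible, rather than merely relaxed-feasible. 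Settling either of these is, to our knowledge, open.
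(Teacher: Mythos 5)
This statement is stated in the paper as a \emph{conjecture}, and the paper gives no proof of it: the authors explicitly present it as open, noting only that the special case of maximizing the sum of coordinates (the last assertion of Theorem~\ref{thm:MA}, from \cite{FKS}) is the evidence suggesting it. Your proposal, to its credit, is honest about this: you establish only the easy inclusion (that every $\Tscr_1$-$\Tscr_2$-feasible vector lies in $Q(G,\Tscr_1)\cap Q(G,\Tscr_2)$, hence so does the convex hull), reduce the hard inclusion to showing that every vertex of the intersection is feasible, sketch two plausible attack routes, and then concede that both remain unsettled. That concession is correct, but it means what you have written is not a proof of the statement; the entire mathematical content of the conjecture --- turning the integrality of the vertices of $Q(G,\Tscr_1)\cap Q(G,\Tscr_2)$ into an actual integral packing of $\Tscr_1$-$\Tscr_2$-paths realizing each vertex --- is exactly the part left open.

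Your diagnosis of where the difficulty lies does match the paper's own discussion closely: the feasibility machinery of \cite{FKS} applies to \emph{even} vectors, which covers the sum-of-coordinates optimum when all degrees are even but not an arbitrary vertex of the intersection; without evenness of the vector itself, Mader-type odd-cut obstructions reappear, and no generalization of Mader's theorem to $\Tscr_1$-$\Tscr_2$-paths can exist in general since already the two-commodity integral flow problem is $\mathcal{NP}$-hard. So your write-up is a reasonable account of the state of the problem, but it should be labelled as such rather than as a proof. (The paper also points out that, somewhat surprisingly, the algorithmic consequence one would want from the conjecture --- polynomial-time weighted optimization over $\Tscr_1$-$\Tscr_2$-feasible vectors --- may be obtainable by a different route, via Kobayashi's jump-system-intersection framework \cite{K}, without resolving the conjecture itself; that is a point worth keeping separate from the polyhedral statement you are trying to prove.)
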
 

If true, the assertion of this conjecture would immediately imply the polynomial computability of optimal feasible vectors for arbitrary weights on $T$. Surprisingly, this seems to be doable, without knowing whether the conjecture is true, in a general context containing both graph factors and paths (see below).  

\medskip
Another use of knowing that a set is a jump-system is that one can sometimes decide feasibility or optimize on the intersection with some other jump-systems. 
A recent breakthrough by Dudycz and Paluch's \cite{Kasia} on graph factors has been   simplified and extended by Kobayashi \cite{K} to the abstract level of jump systems.  Feasible sets defined above from disjoint paths problems can be easily proved to satisfy Kobayashi's conditions -- by pursuing Schrijver's reduction \cite{Sch} to Gallai's theorem \cite{G} where for the appropriate generalization ``general factors'' may be used instead of matchings --  to conclude with weighted optimization without confirming Conjecture~\ref{conj}.


 Kobayashi's conditions \cite[Theorem 5.1]{K} can potentially handle multiflow theorems with various kinds of weightings.  For jump systems in general,  the simple algorithm reducing the feasibility of  general factors of graphs to parity constrained factors \cite{Sthesis} (explained briefly in \cite{corn}), works for  the intersection of some jump systems \cite{S}, making possible to compute the oracle required in \cite[$C_1'$, $C_3'$]{K}, and enabling the use of \cite[Theorem 1.4]{K}, under  generalized conditions.

Results on deciding the  emptiness or finding an element of some jump-system-intersections are being explored in more details in a forthcoming article.

\smallskip\noindent {\bf Acknowledgment}: The authors are  indebted to Satoru Iwata and Yu Yokoi  for several relevant correction/update turns.  

\end{document}